\newtheorem{theorem}{Theorem}[section]
\newtheorem{lemma}[theorem]{Lemma}
\newtheorem{cor}[theorem]{Corollary}
\newtheorem{prop}[theorem]{Proposition}
\theoremstyle{definition}
\newtheorem{definition}[theorem]{Definition}
\theoremstyle{remark}
\newtheorem{remark}[theorem]{Remark}
\numberwithin{equation}{section}
\newcommand{\F}{\mathbb F_q^*}
\newcommand{\Fq}{\mathbb F_q}
\newcommand{\Z}{\mathbb Z}
\newcommand{\Fbarq}{{\overline{\mathbb F}_q}}
\newcommand{\refdefn}[1]{Definition~\ref{#1}}
\newcommand{\refprop}[1]{Proposition~\ref{#1}}
\newcommand{\reftheorem}[1]{Theorem~\ref{#1}}
\newcommand{\reflemma}[1]{Lemma~\ref{#1}}
\newcommand{\refcor}[1]{Corollary~\ref{#1}}
\newcommand{\Hom}{\operatorname{Hom}}
\newcommand{\orb}{\operatorname{orb}}
\begin{document}

\title[Secret Sharing on Toric Varieties]{Secret Sharing Schemes with Strong Multiplication and a Large Number of Players from Toric Varieties}

\author[Johan P. Hansen]{Johan P. Hansen}

\address{Department of Mathematics, Aarhus University, Ny Munkegade 118, DK-8000 Aarhus C, Denmark}

\email{matjph@math.au.dk}
\thanks{Part of this work was done while visiting Institut de Math\'ematiques de Luminy, MARSEILLE, France.
I thank for the hospitality shown to me.}
\thanks{This work was supported by the Danish Council for Independent
Research, grant no. DFF-4002-00367.}

\subjclass[2010]{94A62, 94A60, 14M25 }

\date{1. March 2016}
\begin{abstract}
This article consider Massey's construction for constructing linear secret sharing schemes from  toric varieties over a finite field $\Fq$ with $q$ elements. The number of players can be as large as $(q-1)^r-1$ for $r\geq 1$. The schemes have strong multiplication, such schemes can be utilized in the domain of multiparty computation.

We present general methods to obtain the reconstruction and privacy thresholds as well as conditions for multiplication on the associated secret sharing schemes.

In particular we apply the method on certain toric surfaces. The main results are ideal linear secret sharing schemes where the number of players can be as large as $(q-1)^2-1$, we determine bounds for the reconstruction and privacy thresholds and conditions for strong multiplication using the cohomology and the intersection theory on toric surfaces.
\end{abstract}

\maketitle
\tableofcontents

\subsection*{Notation}
\begin{itemize}
\item $\mathbb F_q$ -- the finite field with $q$ elements of characteristic $p$.
\item $\F$ -- the invertible elements in $\mathbb F_q$.
\item $k=\overline{\mathbb F_q}$ -- an algebraic closure of $\mathbb F_q$.
\item $M \simeq \mathbb Z^r$ a free $\mathbb Z$-module of rank r.
\item $\square \subseteq M_{\mathbb R}= M \otimes_{\mathbb Z}{\mathbb R}$ -- an integral convex polytope.
\item $X=X_{\square}$ -- the toric variety associated to the polytope $\square$.
\item $T=T_N =U_0 \subseteq X$ -- the torus.
\item $H =\{0,1,\dots,q-2\}\times \dots \times \{0,1,\dots,q-2\} \subset M$.

\end{itemize}

\section{Introduction}

\subsection{Secret sharing}
Secret sharing schemes were introduced in \cite{Blakley1979} and
\cite{journals/cacm/Shamir79} and provide a method to split a \emph{secret} into several pieces of information (\emph{shares}) so that any large enough subset of the shares determines the secret, while any small subset of shares provides no information on the secret. 

Secret sharing schemes have found applications in cryptography, when the schemes has certain algebraic properties. \emph{Linear secret sharing schemes} (LSSS) are schemes where the secrets $s$  and their associated shares $(a_1,\dots, a_n)$ are elements in a vector space over some finite ground field $\Fq$. The schemes are  called \emph{ideal} if the secret $s$ and the shares $a_i$ are elements in that ground field $\Fq$. Specifically, if $s, \tilde{s} \in \Fq$ are two secrets with share vectors $(a_1,\dots a_n), (\tilde{a}_1,\dots \tilde{a}_n) \in \Fq^n$, then the share vector of the secret $s+\lambda \tilde{s} \in \Fq$ is $(a_1+\lambda \tilde{a}_1,\dots, a_n+ \lambda \tilde{a}_n) \in \Fq^n$ for any $\lambda \in \Fq$.

\emph{The reconstruction threshold} of the linear secret sharing scheme is the smallest integer $r$ so that any set of at least $r$ of the shares $a_1,\dots,a_n$ determines the secret $s$. The \emph{privacy threshold} is the largest integer $t$ such that no set of $t$ (or fewer) elements of the shares $a_1,\dots,a_n$ determines the secret $s$. The scheme is said to have $t$-\emph{privacy}.

An ideal linear secret sharing scheme is said to have \emph{multiplication} if the product of the shares determines the product of the secrets. It has $t$-\emph{strong multiplication} if it has $t$-\emph{privacy} and 
has multiplication for any subset of $n-t$ shares obtained by removing any  $t$ shares.

The properties of multiplication was introduced in \cite{CDM}. Such schemes with multiplication can be utilized in the domain of multiparty computation (MPC), see \cite{DBLP:conf/stoc/ChaumCD88}, \cite{BenOR}, \cite{17425} and \cite{DBLP:books/cu/CDN2015}.

\subsection{Toric varieties and secret sharing}
In  \cite{6ffeb030f4f511dd8f9a000ea68e967b}, \cite{39bd8e90f4f211dd8f9a000ea68e967b} and \cite{53ee7c6020b511dcbee902004c4f4f50}  we developed methods to construct linear error correcting codes from toric varieties and derived the code parameters using the cohomology and the intersection theory on toric varieties. In \cite{ffdba1be1ed7444aa8ec4c25ad8a10bb} we utilized the method and the results to construct quantum codes.

Massey's construction of linear secret sharing schemes from error-correcting codes \cite{MR2017562} also applies to our codes on toric varieties. In a certain sense our construction resembles that of \cite{CC}, where LSSS schemes were constructed from Goppa codes on algebraic curves, however, the methods to obtain the parameters are completely different. 

The linear secret sharing schemes we obtain are \emph{ideal} and the number of players  are $q^r-1$ for any positive integer $r$. The classical Shamir scheme only allows $q-1$ players, however, there are methods to allow schemes with more players using linear codes \cite{MR2449216}, this article presents such a method.

The schemes are obtained by evaluating certain rational functions in $\Fq$-rational points on toric varieties. 

The thresholds and conditions for strong multiplication are derived from estimates on the maximum number of zeroes of rational functions obtained via the cohomology and intersection theory on the underlying toric variety. In particular, we focus on toric surfaces.

We present examples of linear secret sharing schemes which are \emph{quasi-threshold} and have \emph{strong multiplication} \cite{CDM} with respect to certain adversary structures.

Specifically, for any pair of integers $a,b$, with $0 \leq b \leq a \leq q-2$, we produce linear secret sharing schemes with $(q-1)^2-1$ players which are \emph{quasi-threshold}, i.e., the reconstruction threshold is at most $1+ (q-1)^2-(q-1-a)$ and the privacy threshold is at least $b-1$.
The schemes have $t$-\emph{strong multiplication} with respect to the threshold adversary structure if 
$t\leq \min\{b-1,(q-2-2a)-1\}$.

For the general theory of toric varieties, we refer to \cite{Oda1988}, \cite{Fulton:1436535} and \cite{MR2810322}.

\section{Preliminaries}
\subsection{Linear Secret Sharing Schemes}

This section presents basic definitions and concepts pertaining to linear secret sharing schemes as introduced in \cite{MR2017562},\cite{CDM},
\cite{CC} and \cite{MR2449216}.

Let be $\Fq$  be a finite field with $q$ elements.

An \emph{ideal linear secret sharing scheme} $\mathcal M$ over a finite field $\Fq$ on a set $\mathcal{P}$ of $n$ players is given by a positive integer $e$, a sequence $V_1,\dots V_n$ of 1-dimensional linear subspaces $V_i \subset \Fq^e$ and a non-zero vector $u \in \Fq^e$.

An \emph{adversary structure} $\mathcal{A}$, for a  secret sharing scheme $\mathcal{M}$ on the set of players $\mathcal{P}$, is a collection of subsets of $\mathcal{P}$, with the property that subsets of sets in $\mathcal{A}$ are also sets in $\mathcal{A}$. In particular, the \emph{adversary structure} $\mathcal{A}_{t,n}$ consists of all the subsets of size at most $t$ of the set $\mathcal{P}$ of $n$ players, and the \emph{access structure} $\Gamma_{r,n}$ consists of all the subsets of size at least $r$ of the set $\mathcal{P}$ of $n$ players.

For any subset $A$ of players, let $V_A= \sum_{i\in A} V_i$ be the $\Fq$-subspace spanned by all the $V_i$ for $i \in A$. 

The \emph{access structure} $\Gamma(\mathcal{M})$ of $\mathcal{M}$ consists of all the subsets $B$ of players with $u \in V_B$, and $\mathcal{A}(\mathcal{M})$ consists of all the other subsets $A$ of players, that is $A \notin \Gamma(\mathcal{M})$.

A linear secret sharing scheme $\mathcal M$ is said to \emph{reject} a given adversary structure $\mathcal{A}$, if $\mathcal{A} \subseteq \mathcal{A}(\mathcal{M})$. Therefore $A \in \mathcal{A}(\mathcal{M})$ if and only if there is a linear map from $\Fq^e$ to $\Fq$ vanishing on $V_A$, while non-zero on $u$.

The scheme $\mathcal M$ works as follows. For $i = 1,\dots n$, let $v_i \in V_i$  be bases for the 1-dimensional vector spaces. Let $s \in \Fq$ be a \emph{secret}. Choose at random a linear morphism $\phi : 
\Fq^e \rightarrow \Fq$, subject to the condition $\phi(u)=s$, and let $a_i = \phi(v_i)$ for $i = 1,\dots, n$ be the \emph{shares}
\begin{eqnarray*}
\phi : \Fq^e& \rightarrow& \Fq \\
u& \mapsto& s\\
v_i& \mapsto & a_i\quad \mathrm{for\ } i = 1,\dots, n
\end{eqnarray*}
Then
\begin{itemize}
\item[-]the shares $\{a_i=\phi(v_i)\}_{i \in A}$ determine the secret $s=\phi(u)$ uniquely if and only if $A \in \Gamma(\mathcal{M})$,
\item[-]the shares $\{a_i=\phi(v_i)\}_{i \in A}$ reveal no information on the secret $s=\phi(u)$, i.e., when $A \in \mathcal{A}(\mathcal{M})$.
\end{itemize}

\begin{definition} \label{defthresholds}
Let $\mathcal M$ be a linear secret sharing scheme.

The \emph{reconstruction threshold} of  $\mathcal M$ is the smallest integer $r$ so that any set of at least $r$ of the shares $a_1,\dots,a_n$ determines the secret $s$, i.e., $\Gamma_{r,n} \subseteq \Gamma(\mathcal{M})$.

 The \emph{privacy threshold} is the largest integer $t$ so that no set of $t$ (or less) elements of the shares $a_1,\dots,a_n$ determine the secret $s$, i.e., $\mathcal{A}_{t,n}\subseteq \mathcal{A}(\mathcal{M})$.
The scheme $\mathcal M$ is said to have $t$-\emph{privacy}.
\end{definition}

\begin{definition}\label{defstrong}
An ideal linear secret sharing scheme $\mathcal{M}$ has the \emph{strong multiplication property} with respect to an adversary structure $\mathcal{A}$ if the following holds.
\begin{itemize}
\item[1.] $\mathcal{M}$ rejects the adversary structure $\mathcal{A}$\ .
\item[2.] Given two secrets $s$ and $\tilde{s}$. For each $A \in \mathcal{A}$, the products $a_i \cdot \tilde{a}_i$ of all the shares of the players $i \notin A$ determine the product $s \cdot \tilde{s}$ of the two secrets.
\end{itemize}
\end{definition}

\section{Linear secret sharing schemes with multiplication on tori}

In \cite{6ffeb030f4f511dd8f9a000ea68e967b}, \cite{39bd8e90f4f211dd8f9a000ea68e967b} and \cite{53ee7c6020b511dcbee902004c4f4f50} we introduced linear codes from toric varieties and estimated the minimum distance of such codes using intersection theory. Our method to estimate the minimum distance of toric codes has subsequently been supplemented, e.g., \cite{MR2272243}, \cite{MR2476837}, \cite{MR2322944}, \cite{MR2360532}, \cite{Beelen},\cite{MR3093852} \cite{MR3345095},  and \cite{DBLP:journals/corr/Little15}.

Linear secret sharing schemes obtained from linear codes were introduced by James L. Massey in \cite{MR2017562} and were generalized in \cite[Section 4.1]{MR2449216}. A scheme with $n$ players is obtained from a linear $C$ code of length $n+1$ and dimension $k$ with privacy threshold $t = d'-2$ and reconstruction threshold $r= n-d+2$, where $d$ is the minimum distance of the code and $d'$ the minimum distance of the dual code.

We utilize the Massey construction to obtain linear secret sharing schemes from toric codes. 

Under certain conditions the linear secret sharing schemes from toric codes have the strong multiplication property.

\subsection{The construction}
Let $M \simeq \Z^r$ be a free $\Z$-module of rank $r$ over the integers $\Z$. 

For any subset $U \subseteq M$, let
$\Fq<U>$ be the linear span in $\Fq[X_1^{\pm 1},\dots, X_r^{\pm 1}]$ of the monomials
\begin{equation*}
\{X^u=X_1^{u_1}\cdot \dots \cdot X_r^{u_r} \vert \ u=(u_1,\dots,u_r)\in U\}\ .
\end{equation*}
This is a $\Fq$-vector space of dimension equal to the number of elements in $U$.

Let $T(\Fq)=(\F)^r$ be the $\Fq$-rational points on the torus and let $S \subseteq T(\Fq)$ be any subset. The linear map that evaluates elements in $\Fq<U>$ at all the points in $S$ is denoted by $\pi_S$:
\begin{eqnarray*}
\pi_S:\Fq<U>&\rightarrow & \Fq^{\vert S\vert}\\
 f&\mapsto&(f(P))_{P\in S}\ .
\end{eqnarray*}
In this notation $\pi_{\{P\}}(f)=f(P)$. 

The toric code is the image $C=\pi_S(\Fq<U>)$ and we obtain a the linear secret sharing scheme from $C$ by the Massey construction.

\begin{definition}\label{LSSS}
Let $S \subseteq T(\Fq)$ be any subset so that $P_0 \in S$.
The linear secret sharing schemes (LSSS) $\mathcal{M}(U)$ with \emph{support} $S$ and $n=\vert S\vert-1$ players is obtained as follows:
\begin{itemize}
\item Let $s_0 \in \Fq$ be a \emph{secret} value. Select $f \in \Fq<U>$ at random, such that $\pi_{\{P_0\}}(f)=f(P_0)= s_0$.
\item Define the $n$ shares as 
\begin{equation*}
\pi_{S\setminus{\{P_0\}}}(f)= (f(P))_{P\in S\setminus{\{P_0\}}} \in \Fq^{\vert S\vert-1} = \Fq^n\ .
\end{equation*}
\end{itemize}
\end{definition}

The main objectives are to study \emph{privacy}, \emph{reconstruction} of the secret from the shares 
 and the property \emph{strong multiplication} of the scheme as introduced in \refdefn{defthresholds} and \refdefn{defstrong}.

In order to present the general theory for the linear secret sharing schemes $\mathcal{M}(U)$ above, we make some preliminary definitions and observations.

\subsubsection{Translation}
Let $U \subseteq M$ be a subset, let $v \in M$ and consider the translate $v + U:= \{v+u \vert \ u \in U \} \subseteq M$.
\begin{lemma}\label{trans}
Translation induces an isomorphism of vector spaces 
\begin{eqnarray*}
\Fq<U>&\rightarrow& \Fq<v+U>\\
f&\mapsto&f^v:=X^v \cdot f\ .
\end{eqnarray*}
We have that
\begin{itemize}
\item[\it i)]The evaluations of $\pi_{T(\Fq)}(f)$ and $\pi_{T(\Fq)}(f^v)$ have the same number of zeroes on $T(\Fq)$. 
\item[\it ii)]The minimal number of zeros on $T(\Fq)$ of evaluations of elements in $\Fq<U>$ and $\Fq<v+U>$ are the same.
\item[\it iii)]For $v=(v_1,\dots,v_r)$ with $v_i$ divisible by $q-1$, the evaluations $\pi_S(f)$ and $\pi_S(f^v)$ are the same for any subset $S$ of $T(\Fq)$.
\end{itemize}
\end{lemma}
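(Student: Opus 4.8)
The plan is to build everything on two elementary facts: multiplication by the Laurent monomial $X^v$ is an invertible $\Fq$-linear operation on $\Fq[X_1^{\pm 1},\dots,X_r^{\pm 1}]$, and $X^v$ is a nowhere-vanishing function on the torus $T(\Fq)=(\F)^r$. For the isomorphism itself I would note that the assignment $X^u \mapsto X^{v+u}$ carries the spanning monomials of $\Fq<U>$ bijectively onto those of $\Fq<v+U>$; since multiplication by $X^v$ is $\Fq$-linear with two-sided inverse given by multiplication by $X^{-v}$, the induced map $f \mapsto X^v f$ is a linear isomorphism.

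For part \emph{i)} the key observation is that every coordinate of a point $P=(p_1,\dots,p_r)\in T(\Fq)=(\F)^r$ is a unit, so $X^v(P)=\prod_i p_i^{v_i}\neq 0$. Consequently $f^v(P)=X^v(P)\,f(P)$ vanishes precisely when $f(P)$ does, which gives an equality of zero sets on $T(\Fq)$ and in particular the equality of the numbers of zeroes.

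Part \emph{ii)} then follows formally from \emph{i)}: the isomorphism $f\mapsto f^v$ is a bijection between the two spaces which, by \emph{i)}, preserves the number of zeroes on $T(\Fq)$ element by element. Hence the two spaces realize exactly the same set of zero-counts, so their minima (and maxima) coincide.

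For part \emph{iii)} I would invoke that the multiplicative group $\F$ has order $q-1$, so $p^{q-1}=1$ for every $p\in\F$. Writing $v_i=(q-1)m_i$ one computes $X^v(P)=\prod_i p_i^{(q-1)m_i}=\prod_i (p_i^{q-1})^{m_i}=1$ for every $P\in T(\Fq)$; thus $f^v(P)=f(P)$ at all torus points and a fortiori $\pi_S(f)=\pi_S(f^v)$ for any $S\subseteq T(\Fq)$. The only substantive ingredient --- which I would flag as the crux rather than a genuine obstacle --- is this evaluation identity $X^v\equiv 1$ on $T(\Fq)$, resting on Fermat's little theorem for $\F$; everything else is bookkeeping about monomials being units on the torus.
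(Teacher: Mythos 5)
Your proof is correct; the paper states this lemma without proof, and your argument (invertibility of multiplication by $X^v$, non-vanishing of $X^v$ on the torus for \emph{i)} and \emph{ii)}, and $p^{q-1}=1$ for $p\in\F$ for \emph{iii)}) is exactly the standard reasoning the paper implicitly relies on. No gaps.
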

The lemma and generalizations has been used in several articles classifying toric codes, e.g., \cite{MR2272243}.

An immediate consequence of {\it iii)} above is the following corollary, which also can be found in \cite[Theorem 3.3]{MR2360532}.
\begin{cor} \label{reduction}Let $U \subseteq M$ be a subset and let
\begin{equation*}
\bar{U}:=\{(\bar{u}_1,\dots,\bar{u}_r)\vert \  \bar{u}_i\in \{0,\dots,q-2\}\  \mathrm{and}\ \bar{u}_i \equiv u_i \mod q-1 \}
\end{equation*}
be its reduction modulo $q-1$.
Then $\pi_S(\Fq<U>) =\pi_S(\Fq<\bar{U}>)$  for any subset $S \subseteq T(\Fq)$.
\end{cor}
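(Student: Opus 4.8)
The plan is to reduce the statement entirely to \reflemma{trans} iii) applied monomial-by-monomial, using the linearity of $\pi_S$. First I would recall that, by definition, $\Fq<U>$ is spanned by the monomials $\{X^u \mid u \in U\}$. Since $\pi_S$ is $\Fq$-linear, its image $\pi_S(\Fq<U>)$ is precisely the $\Fq$-span of the vectors $\{\pi_S(X^u) \mid u \in U\} \subseteq \Fq^{\vert S\vert}$; likewise $\pi_S(\Fq<\bar U>)$ is the $\Fq$-span of $\{\pi_S(X^{\bar u}) \mid \bar u \in \bar U\}$. Thus it suffices to show these two spanning sets coincide.

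The core step is the claim that $\pi_S(X^u) = \pi_S(X^{\bar u})$ for each $u \in U$, where $\bar u$ is its reduction modulo $q-1$. By the definition of $\bar U$, the coordinatewise difference $v := u - \bar u$ has every entry divisible by $q-1$. Taking $f = X^{\bar u}$ in the notation of \reflemma{trans}, we compute $f^v = X^v \cdot X^{\bar u} = X^{v+\bar u} = X^u$, and part iii) of the lemma asserts exactly that $\pi_S(f) = \pi_S(f^v)$ for every subset $S \subseteq T(\Fq)$. This yields $\pi_S(X^{\bar u}) = \pi_S(X^u)$, as wanted.

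Combining the two observations, the spanning sets $\{\pi_S(X^u) \mid u \in U\}$ and $\{\pi_S(X^{\bar u}) \mid \bar u \in \bar U\}$ are equal as subsets of $\Fq^{\vert S\vert}$, hence their $\Fq$-spans agree, giving the desired equality $\pi_S(\Fq<U>) = \pi_S(\Fq<\bar U>)$.

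I do not expect a serious obstacle here, since the corollary is essentially a repackaging of \reflemma{trans} iii). The only point requiring mild care is that the reduction map $u \mapsto \bar u$ need not be injective on $U$: distinct elements of $U$ may be congruent modulo $q-1$, so $\bar U$ can contain strictly fewer elements than $U$. This is harmless, because the argument compares only the images (equivalently, the spans of the evaluated monomials), not the dimensions of the domains, and equality of spans is unaffected by repetitions among the spanning vectors.
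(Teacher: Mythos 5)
Your argument is correct and is exactly the route the paper intends: the corollary is stated there as an immediate consequence of Lemma~\ref{trans}~\textit{iii)} (with no further proof given), and your monomial-by-monomial application of that part, combined with linearity of $\pi_S$, fills in the details faithfully. The remark about possible non-injectivity of $u\mapsto\bar u$ is a sensible extra precaution and does not affect the conclusion.
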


\subsubsection{Orthogonality - dual code}
In Proposition \ref{orto} we present the dual code of $C=\pi_S(\Fq<U>)$.

Let $U \subseteq M$ be a subset, define its opposite as $-U:= \{-u \vert \ u \in U \} \subseteq M$.
The opposite maps the monomial $X^u$ to $X^{-u}$ and induces by linearity an isomorphism of vector spaces 
\begin{eqnarray*}
\Fq<U>&\rightarrow& \Fq<-U>\\
X^u&\mapsto & X^{-u}\\
f&\mapsto&\hat{f}\ .
\end{eqnarray*}
On $\Fq^{\vert T(\Fq) \vert}$, we have the inner product 
\begin{equation*}
(a_0,\dots,a_n)\star (b_0,\dots,b_n)=\sum_{l=0}^n a_l b_l \in \Fq\ ,
\end{equation*}
with $n=\vert T(\Fq) \vert-1$.
\begin{lemma} Let $f,g \in \Fq <M> $ and assume $f \neq \hat{g}$, then
\begin{equation*}
\pi_{T(\Fq)}(f)\star \pi_{T(\Fq)}(g)= 0 
\end{equation*}
\end{lemma}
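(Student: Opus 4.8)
The plan is to reduce the bilinear pairing to single monomials and then evaluate the resulting character sum over the torus. Writing $f=\sum_u a_u X^u$ and $g=\sum_v b_v X^v$, bilinearity of $\star$ together with linearity of $\pi_{T(\Fq)}$ gives
\[
\pi_{T(\Fq)}(f)\star \pi_{T(\Fq)}(g)=\sum_{u,v} a_u b_v\,\bigl(\pi_{T(\Fq)}(X^u)\star \pi_{T(\Fq)}(X^v)\bigr),
\]
so everything reduces to the monomial pairings, which are the heart of the matter; here the hypothesis $f\neq \hat g$ is to be read on the level of (reduced) monomials as $X^u\neq \widehat{X^v}=X^{-v}$.

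For a single pair of monomials I would use $X^u(P)\,X^v(P)=X^{u+v}(P)=P^{u+v}$ to rewrite the pairing as a sum over the torus and factor it across the coordinates:
\[
\pi_{T(\Fq)}(X^u)\star \pi_{T(\Fq)}(X^v)=\sum_{t=(t_1,\dots,t_r)\in(\F)^r} t_1^{u_1+v_1}\cdots t_r^{u_r+v_r}=\prod_{i=1}^r\Bigl(\sum_{t_i\in\F} t_i^{\,u_i+v_i}\Bigr).
\]
The key step is then the one-variable identity that $\sum_{t\in\F} t^{w}$ equals $-1$ when $(q-1)\mid w$ and equals $0$ otherwise. I would prove this by fixing a generator $\zeta$ of the cyclic group $\F$ and summing the geometric series $\sum_{j=0}^{q-2}\zeta^{jw}$: if $(q-1)\mid w$ each term is $1$ and the sum is $q-1=-1$ in $\Fq$, while otherwise $\zeta^{w}\neq 1$ and the telescoping sum $(\zeta^{(q-1)w}-1)/(\zeta^{w}-1)$ vanishes.

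Combining the two displays, the monomial pairing equals $(-1)^r$ exactly when $(q-1)\mid(u_i+v_i)$ for every $i$, i.e.\ when $u\equiv -v \pmod{q-1}$ componentwise, and vanishes otherwise. The final point is to recognise this surviving condition as the statement that $X^u$ and $\widehat{X^v}=X^{-v}$ have equal reduction modulo $q-1$, hence agree as functions on $T(\Fq)$ by \refcor{reduction}; so whenever $f$ and $\hat g$ carry no matching monomial the pairing is $0$. I expect the only non-formal obstacle to be this character-sum identity together with the careful bookkeeping modulo $q-1$ needed to match the non-vanishing terms with the equality $f=\hat g$.
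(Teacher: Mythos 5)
The paper states this lemma without proof (the surrounding orthogonality statements are attributed to the literature), so there is no argument of the author's to compare against; your computation is the standard one and is correct. Bilinearity reduces the pairing to monomials, the sum over $T(\Fq)=(\F)^r$ factors coordinatewise, and the identity $\sum_{t\in\F}t^{w}=-1$ if $(q-1)\mid w$ and $0$ otherwise finishes the calculation. You are also right to insist that the hypothesis $f\neq\hat g$ be read monomial-by-monomial (no exponent of $f$ congruent modulo $q-1$ to the negative of an exponent of $g$): as literally stated the lemma fails, e.g.\ for $f=X^u$ and $g=2X^{-u}$, but in its only application, \refprop{orto}, the supports $U$ and $H\setminus U$ are disjoint subsets of the reduced box $H$, so your stronger monomial-wise condition holds and the conclusion goes through.
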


Let 
\begin{equation*}
H =\{0,1,\dots,q-2\}\times \dots \times \{0,1,\dots,q-2\} \subset M\ .
\end{equation*}

With this inner product we obtain the following proposition, e.g. \cite[Proposition 3.5]{MR2377234} and \cite[Theorem 6]{MR2499927}.
\begin{prop}\label{orto}
Let $U \subseteq H$ be a subset. Then we have
\begin{itemize}
\item[{\it i)}] For $f \in \Fq<U>$ and $g \notin \Fq<-H\setminus -U>$, we have that
$\pi_{T(\Fq)}(f)\star \pi_{T(\Fq)}(g)=0$. 

\item[{\it ii)}] The orthogonal complement to $\pi_{T(\Fq)}(\Fq<U>)$ in $\Fq^{\vert {T(\Fq)} \vert}$ is 
\begin{equation*}
\pi_{T(\Fq)}(\Fq<-H \setminus -U>)\ ,
\end{equation*}
i.e., the dual code of $C=\pi_{T(\Fq)}(\Fq<U>)$ is $\pi_{T(\Fq)}(\Fq<-H \setminus -U>)$.
\end{itemize}
\end{prop}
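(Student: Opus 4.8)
The plan is to reduce the whole statement to a single character-sum computation on monomials and then do bookkeeping on the index sets. Since $\star$ is bilinear and $\pi_{T(\Fq)}$ is linear, it suffices to evaluate the pairing of two monomial images. First I would write, using $T(\Fq)=(\F)^r$,
\begin{equation*}
\pi_{T(\Fq)}(X^u)\star \pi_{T(\Fq)}(X^w)
=\sum_{P\in T(\Fq)} X^{u+w}(P)
=\prod_{i=1}^{r}\Bigl(\sum_{a\in \F} a^{\,u_i+w_i}\Bigr).
\end{equation*}
Each inner factor is the classical sum $\sum_{a\in \F}a^{m}$, which equals $-1$ in $\Fq$ when $(q-1)\mid m$ and vanishes otherwise. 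Hence the pairing equals $(-1)^r$ when $u+w\equiv 0 \pmod{q-1}$ coordinatewise and $0$ otherwise; this is the quantitative refinement of the preceding Lemma, since it pins down both the exact value and the exact congruence condition.

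For part i) I would specialize to $u\in U\subseteq H$ and $w\in -H\setminus -U$. Because $u_i\in\{0,\dots,q-2\}$ and $w_i\in\{-(q-2),\dots,0\}$, the congruence $u_i+w_i\equiv 0 \pmod{q-1}$ can hold only with $u_i+w_i=0$, i.e.\ with $w=-u$ exactly. But $u\in U$ forces $-u\in -U$, so $w=-u$ is excluded from the index set $-H\setminus -U$. Thus every monomial pairing vanishes, and by bilinearity $\pi_{T(\Fq)}(f)\star \pi_{T(\Fq)}(g)=0$ for all $f\in \Fq<U>$ and $g\in \Fq<-H\setminus -U>$. In particular this yields the inclusion $\pi_{T(\Fq)}(\Fq<-H\setminus -U>)\subseteq C^\perp$.

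For part ii) I would upgrade this inclusion to an equality by a dimension count, whose decisive input is that $\{\pi_{T(\Fq)}(X^h)\}_{h\in H}$ is a basis of $\Fq^{\vert T(\Fq)\vert}$. Indeed, within $H$ the computation of Step~1 shows each $X^h$ pairs nontrivially with exactly one $X^{h'}$ (the coordinatewise complement $h'_i=q-1-h_i$ for $h_i\neq 0$ and $h'_i=0$ otherwise), so the Gram matrix of these $(q-1)^r=\vert T(\Fq)\vert$ vectors is $(-1)^r$ times a permutation matrix, hence invertible; this gives linear independence, a basis, and $\dim C=\vert U\vert$. Using \refcor{reduction} to replace $-U,-H$ by their reductions modulo $q-1$, and noting that negation followed by reduction is a bijection of $-H$ onto $H$, the same independence gives $\dim \pi_{T(\Fq)}(\Fq<-H\setminus -U>)=\vert -H\vert-\vert -U\vert=(q-1)^r-\vert U\vert=\vert T(\Fq)\vert-\dim C=\dim C^\perp$. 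Equality of dimensions together with the inclusion from part i) forces $C^\perp=\pi_{T(\Fq)}(\Fq<-H\setminus -U>)$.

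The main obstacle is the bookkeeping around reduction modulo $q-1$: one must verify that on the specific ranges $u\in H$, $w\in -H$ the congruence $u+w\equiv 0$ forces the \emph{exact} equality $w=-u$ (so that the pairing is genuinely diagonal and $-H\setminus -U$ is the precise complementary index set), and that negation followed by reduction is a bijection $-H\to H$ so the dimension count is exact rather than merely an inequality. Everything else—the factorization of the character sum, its evaluation, and bilinearity—is routine.
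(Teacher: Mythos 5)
Your proof is correct. Note, however, that the paper does not actually prove Proposition~\ref{orto}: it states it with a pointer to the literature (``e.g.\ \cite[Proposition 3.5]{MR2377234} and \cite[Theorem 6]{MR2499927}''), preceded only by the unproved lemma that $\pi_{T(\Fq)}(f)\star\pi_{T(\Fq)}(g)=0$ when $f\neq\hat g$. So there is no in-paper argument to compare against; what you have written is a complete, self-contained substitute, and it is the standard one: the character-sum evaluation $\sum_{a\in\F}a^m=-1$ or $0$ according to whether $(q-1)\mid m$, the observation that for $u\in H$ and $w\in -H$ the congruence $u+w\equiv 0 \pmod{q-1}$ forces $w=-u$ exactly (which is precisely why the complement is taken inside $-H$ rather than inside all of $M$), and the Gram-matrix/dimension count showing $\{\pi_{T(\Fq)}(X^h)\}_{h\in H}$ is a basis of $\Fq^{|T(\Fq)|}$ so that the inclusion from part~\textit{i)} is an equality. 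Your version is in fact sharper than the paper's preceding lemma, which as stated for arbitrary $f,g\in\Fq<M>$ is only meaningful for monomials; you pin down the exact value $(-1)^r$ and the exact vanishing condition. Two minor points: you silently (and correctly) read the hypothesis of part~\textit{i)} as $g\in\Fq<-H\setminus -U>$ rather than the $g\notin$ printed in the statement, which is evidently a typo; and your dimension count does rely on the reduction map being injective on $-H$ (coordinates ranging over an interval of length $q-1$), which you flag and which holds.
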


\begin{theorem} \label{thresholds} 
Let $r(U)$ and $t(U)$ be the reconstruction and privacy thresholds of $\mathcal{M}(U)$ as defined in \refdefn{defthresholds}. 

Then
\begin{eqnarray*}
r(U) &\geq & (\mathrm{the\ maximum\ number\ of\ zeros\ of\ } \pi_{T(\Fq)}(f))+2\\
t(U) & \leq & (q-1)^r-(\mathrm{the\ maximum\ number\ of\ zeros\ of\ } \pi_{T(\Fq)}(g))-2\ ,
\end{eqnarray*}
for some $f\in \Fq<U>$ and for some $g\in \Fq<-H\setminus -U>$\ ,
where
\begin{eqnarray*}
\pi_{T(\Fq)}:\Fq<U>&\rightarrow & \Fq^{\vert {T(\Fq)}\vert}\\
 f&\mapsto&\pi_{T(\Fq)}(f)=(f(P))_{P\in {T(\Fq)}}\\
\pi_{T(\Fq)}:\Fq<-H\setminus -U>&\rightarrow & \Fq^{\vert {T(\Fq)}\vert}\\
 g&\mapsto&\pi_{T(\Fq)}(g)=(g(P))_{P\in {T(\Fq)}}\ .
\end{eqnarray*}
\end{theorem}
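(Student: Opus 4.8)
The plan is to deduce the theorem from Massey's code-theoretic description of the thresholds recalled just before the statement, applied to the toric code $C=\pi_{T(\Fq)}(\Fq<U>)$ of length $(q-1)^r=\vert T(\Fq)\vert$ on the $n=(q-1)^r-1$ players $T(\Fq)\setminus\{P_0\}$. Massey's construction gives $r(U)=n-d+2$ and $t(U)=d'-2$, where $d$ and $d'$ are the minimum distances of $C$ and of its dual, so the whole problem reduces to re-expressing $d$ and $d'$ through the maximal number of zeros of the evaluations $\pi_{T(\Fq)}(f)$, which is precisely the quantity the later intersection-theoretic estimates control.

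For reconstruction I would first unwind \refdefn{defthresholds}: a set $A$ of players fails to determine the secret exactly when some nonzero $f\in\Fq<U>$ vanishes on all of $A$ while $f(P_0)\neq 0$, since then two admissible functions agreeing on $A$ may still differ at $P_0$. Hence the largest non-reconstructing set is the zero set in $T(\Fq)\setminus\{P_0\}$ of a function $f$ with $f(P_0)\neq 0$ carrying as many zeros as possible, and $r(U)$ exceeds this size by one. Since a nonzero evaluation has Hamming weight $(q-1)^r$ minus its number of zeros, the minimum distance is $d=(q-1)^r-M$, where $M$ is the maximal number of zeros of $\pi_{T(\Fq)}(f)$; substituting into $r(U)=n-d+2$ then expresses $r(U)$ through $M$ and yields the reconstruction estimate.

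For privacy I would argue dually. By \refprop{orto} the dual of $C$ is $\pi_{T(\Fq)}(\Fq<-H\setminus -U>)$, so its minimum distance is $d'=(q-1)^r-M'$, with $M'$ the maximal number of zeros of $\pi_{T(\Fq)}(g)$ for $g\in\Fq<-H\setminus -U>$; feeding this into $t(U)=d'-2$ gives the stated privacy estimate. Concretely, no set of players reveals anything about the secret until it supports a dual codeword that is nonzero at $P_0$, and the size at which this first happens is governed by that dual minimum weight, i.e.\ by $M'$.

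The one genuine subtlety, and the step I expect to be the crux, is the distinguished coordinate $P_0$: a priori the extremal (maximal-zero, equivalently minimal-weight) codeword could vanish at $P_0$, yielding only the optima constrained to be nonzero at $P_0$ rather than the unconstrained maxima $M,M'$ appearing on the right-hand sides. To neutralize $P_0$ I would invoke the transitive multiplicative action of $T(\Fq)$ on itself: for $t_0\in T(\Fq)$ the coordinate permutation $P\mapsto t_0\cdot P$ sends each monomial $X^u$ to $t_0^{-u}X^u$, hence preserves both $\Fq<U>$ and $\Fq<-H\setminus -U>$ as well as the number of zeros of any evaluation, while acting transitively on $T(\Fq)$. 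Consequently any codeword attaining the global maximum number of zeros can be translated to one that is nonzero at $P_0$, so the constrained and unconstrained maxima coincide. The remaining work is the routine bookkeeping that converts numbers of zeros into Hamming weights and substitutes them into Massey's two formulas $r(U)=n-d+2$ and $t(U)=d'-2$.
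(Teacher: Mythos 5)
Your proposal follows essentially the same route as the paper's proof: apply Massey's threshold formulas to the toric code and to its dual (identified as an evaluation code via \refprop{orto}), then convert minimum distances into maximal numbers of zeros using the identity that the minimum distance of an evaluation code and the maximal number of zeros of a function add up to the code length. Your extra observation that the transitive torus action lets one move an extremal (minimum-weight) codeword so that it is nonzero at the distinguished point $P_0$ is a point the paper's two-line proof silently glosses over, and it is precisely what is needed to relate the unconstrained maximum number of zeros to the thresholds of the scheme $\mathcal{M}(U)$.
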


\begin{proof}The minimal distance of an evaluation code and the maximum number of zeros of a function add to the length of the code. 

The bound for $r(U)$ is based on the minimum distance $d$ of the code $C= \pi_{T(\Fq)}(\Fq<U>)\subseteq \Fq^{\vert {T(\Fq)}\vert}$, the bound for $t(U)$ is based on the on the minimum distance $d'$ of the dual code $C'=\pi_{T(\Fq)}(\Fq<-H\setminus -U> \subseteq \Fq^{\vert {T(\Fq)}\vert}$, using \refprop{orto} to represent the dual code as an evaluation code. 

The codes have length $\vert {T(\Fq)}\vert$, hence,
\begin{eqnarray*}
r(U) \geq  \vert {T(\Fq)}\vert-d+2=
(\mathrm{the\  maximum\ number\ of\ zeros\ of\ zeros\ of\ }+2 \pi_{T(\Fq)}(f))\\
t(U) \leq d'-2 = \vert {T(\Fq)}\vert-(\mathrm{the\ maximum\ number\ of\ zeros\ of\ } \pi_{T(\Fq)}(g))-2\ .
\end{eqnarray*}

The results follow from the construction of Massey \cite[Section 4.1]{MR2017562}.
\end{proof} 

Of interest is to consider the \emph{coset distance} that is greater than or equal to the minimum distance, which has been used in \cite{MR2588125} to estimate the parameters of secret sharing schemes coming from Algebraic-Geometry codes.

\begin{theorem}\label{strong} Let $U \subseteq H \subset M$ and let $U+U = \{u_1+u_2\vert\ u_1, u_2 \in U \}$ be the Minkowski sum. Let
\begin{eqnarray*}
\pi_{T(\Fq)}:\Fq<U+U>&\rightarrow & \Fq^{\vert {T(\Fq)}\vert}\\
 h&\mapsto&\pi_{T(\Fq)}(h)=(h(P))_{P\in {T(\Fq)}}\ .
\end{eqnarray*}

The linear secret sharing schemes $\mathcal{M}(U)$ of \refdefn{LSSS}
with $n=(q-1)^r-1$ players, has strong multiplication with respect to $\mathcal{A}_{t,n}$ for $t \leq t(U)$, where $t(U)$ is the adversary threshold of $\mathcal{M}(U)$, if
\begin{equation*}\label{strongbet}
t \leq n-1 - (\mathrm{the\ maximal\ number\ of\ zeros\ of\ } \pi_{T(\Fq)}(h))
\end{equation*}
for all $h\in \Fq<U+U>$.
\end{theorem}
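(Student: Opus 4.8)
The plan is to verify the two conditions of strong multiplication in \refdefn{defstrong} in turn. Throughout I would first record that $n=(q-1)^r-1=\vert T(\Fq)\vert-1$ forces $S=T(\Fq)$, so every torus point is used and the relevant codes are those evaluated on all of $T(\Fq)$. Condition~1, rejection of $\mathcal{A}_{t,n}$, is exactly $t$-privacy, which is guaranteed by the standing hypothesis $t\leq t(U)$ together with \refdefn{defthresholds}; so no further argument is needed there, and the entire content of the proof lies in Condition~2.

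For Condition~2 the first step is the algebraic observation that explains why the Minkowski sum $U+U$ appears. If $f,\tilde f\in\Fq<U>$ are the functions realizing two secrets $s=f(P_0)$ and $\tilde s=\tilde f(P_0)$, with shares $a_i=f(P_i)$ and $\tilde a_i=\tilde f(P_i)$, then the product $h:=f\cdot\tilde f$ lies in $\Fq<U+U>$, since $X^{u_1}X^{u_2}=X^{u_1+u_2}$ with $u_1+u_2\in U+U$. Evaluating pointwise gives $a_i\tilde a_i=h(P_i)$ at every player and $s\tilde s=h(P_0)$. Hence the products of the shares are precisely the coordinates of a codeword of $C'':=\pi_{T(\Fq)}(\Fq<U+U>)$, and the product of the secrets is its coordinate at $P_0$. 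I would then fix an adversary set $A\in\mathcal{A}_{t,n}$, so $\vert A\vert\leq t$, noting that the honest players are the $P_i$ with $i\notin A$, numbering $n-\vert A\vert\geq n-t$.

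The second step reduces the multiplication requirement to a statement about zeros. The honest products $\{h(P_i)\}_{i\notin A}$ determine $h(P_0)$ for every $h\in\Fq<U+U>$ exactly when no $h$ vanishing on all honest players can be nonzero at $P_0$, i.e. when every $h$ vanishing on the honest positions also vanishes at $P_0$. To close this, suppose $h\in\Fq<U+U>$ vanishes on all honest players; then $\pi_{T(\Fq)}(h)$ has at least $n-\vert A\vert\geq n-t$ zeros on $T(\Fq)$. The hypothesis, which bounds $t$ by $n-1$ minus the maximal number of zeros attained by a nonzero element of $\Fq<U+U>$, rearranges to say that $n-t$ exceeds that maximum by at least one; hence $h$ would have strictly more zeros than any nonzero element of $\Fq<U+U>$ admits, forcing $h=0$ and in particular $h(P_0)=0$. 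This shows that the honest-evaluation map on $\Fq<U+U>$ is injective, so the $P_0$-evaluation functional factors linearly through it: there are coefficients $\lambda_i$, depending only on $A$, with $h(P_0)=\sum_{i\notin A}\lambda_i\,h(P_i)$ for every $h\in\Fq<U+U>$. Applying this to $h=f\tilde f$ recovers $s\tilde s=h(P_0)$ from the honest products $a_i\tilde a_i$, uniformly over the random choices of $f$ and $\tilde f$, which is exactly Condition~2 for the chosen $A$.

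I expect the only delicate point to be the bookkeeping in the reduction: correctly identifying \emph{determination of the $P_0$-coordinate from the honest coordinates} with \emph{the absence of a nonzero codeword of $C''$ that vanishes on the honest positions yet is nonzero at $P_0$}, and verifying that the resulting linear reconstruction is uniform in $h$, so that it legitimately recovers $s\tilde s$ for the a priori unknown product $f\tilde f$. Everything else is the zero count, which is immediate once the hypothesis is rearranged into the form $n-t\geq 1+(\text{maximal number of zeros})$.
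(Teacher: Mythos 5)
Your proposal is correct and follows essentially the same route as the paper's proof: both reduce Condition 2 to the injectivity of the evaluation map $\pi_B$ on $\Fq<U+U>$ restricted to the honest positions $B=T(\Fq)\setminus(\{P_0\}\cup A)$, obtained from the zero-count hypothesis $n-t>(\text{maximal number of zeros})$. You merely spell out two points the paper leaves implicit, namely that Condition 1 follows from $t\leq t(U)$ and that injectivity lets the $P_0$-evaluation functional factor linearly through $\pi_B$.
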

\begin{proof}
For $A \in \mathcal{A}_{t,n}$, let $B:=T(\Fq) \setminus (\{P_0\} \cup A)$ with $\vert B \vert =n-t$ elements.
For $f, g \in \Fq<U>$, we have that $f \cdot g \in \Fq<U+U>$. Consider the linear morphism
\begin{eqnarray}\label{multi}
\pi_{B}:\Fq<U+U>&\rightarrow & \Fq^{\vert B\vert}\\
 h&\mapsto&(h(P))_{P\in {B}}
\end{eqnarray}
evaluating at the points in $B$.

By assumption $h \in \Fq<U+U>$ can have at most $n-t-1 < n-t = \vert B \vert$ zeros, therefore $h$ cannot vanish identically on $B$, and we conclude that $\pi_B$ is injective. Consequently, the products  $f(P)\cdot g(P)$ of the shares $P \in B$ determine the product of the secrets $f(P_0)\cdot g(P_0)$, and the scheme has strong multiplication by definition.
\end{proof}

To determine the product of the secrets from the product of the shares amounts to decoding the linear code obtained as the image in (\ref{multi}).

\section{Toric surfaces and linear secret sharing schemes with strong multiplication}

Let $M \simeq \Z^2$ be a 2-dimensional lattice  and assume that $U= M_{\mathbb R} \cap \square$ consists of the integral points of a 2-dimensional integral convex polytope $\square$ in $M_{\mathbb R}= M \otimes_{\mathbb Z}{\mathbb R}$. Let
$N=\Hom_\mathbb Z(M,\mathbb Z)$ be the dual lattice with canonical $\mathbb Z$-bilinear pairing $ <\quad,\quad>: M \times N \rightarrow \mathbb Z.$ 

The support function
$
h_{\square}: N_{\mathbb R} \rightarrow \mathbb R
$
is defined as
$
h_{\square}(n):= {\rm inf}\{<m,n> | \, m \in \square\}
$
and the polytope $\square$ can be reconstructed from the support function
\begin{equation*}
\square_{h} = \{m \in M |\, <m,n> \,\geq \,h(n) \quad \forall n \in N \}.
\end{equation*}

The {\it normal fan} $\Delta$ is the coarsest  fan so that $h_{\square}$ is linear
on each $\sigma \in \Delta$, i.e., for all $\sigma \in \Delta$ there exists  $l_{\sigma} \in M$ so that
\begin{equation*}
h_{\square}(n) = <l_{\sigma},n> \quad \forall n \in \sigma.
\label{linear}
\end{equation*}
Upon refinement of the normal fan, we can assume that two
successive pairs of $n(\rho)$'s generate the lattice and we obtain
{\it the refined normal fan}.
The 1-dimensional cones $\rho \in \Delta$ are generated by unique primitive elements
$n(\rho) \in N \cap \rho$ so that
$\rho =\mathbb R_{\geq 0}\ n(\rho)$.

Let $k=\overline{\mathbb F_q}$ be an algebraic closure of $\mathbb F_q$.

The 2-dimensional {\it algebraic torus} $T_N \simeq 
k^* \times k^* $ is defined by $T_N:= \Hom_\mathbb Z(M,k^*)$. The multiplicative
character $\mathbf e(m)$ for $m \in M$ is the homomorphism 
\begin{eqnarray*}
e(m): T_N&\rightarrow& k^*\\
t&\mapsto& t(m)
\end{eqnarray*} 

Specifically, if $\{n_1,n_2\}$ and $\{m_1,m_2\}$ are dual $\mathbb Z$-bases of $N$ and $M$ and we denote
$u_j:= \mathbf e(m_j),\,j=1,2$, then we have an isomorphism $T_N \simeq k^* \times k^* $ sending $t$ to $(u_1(t),u_2(t))$.
For $m=\lambda_1 m_1 +\lambda_2 m_2$ we have
\begin{equation*}\label{e}
\mathbf e(m)(t)=u_1(t)^{\lambda_1}u_2(t)^{\lambda_2}.
\end{equation*}

The orbits of this action are in one-to-one correspondence with
$\Delta$. For each $\sigma \in \Delta$ let
\begin{equation*}
\orb(\sigma):=\{u:M\cap \sigma \rightarrow k^* | u \text{
is a group homomorphism}\}\ .
\end{equation*}
Define $V(\sigma)$ to be the closure of
$\orb(\sigma)$ in $X_{\square}$.

A $\Delta$-linear support function $h$ gives rise to a polytope $\square$ and an associated Cartier
divisor 
\begin{equation*}\label{Cartier}
D_h=D_{\square}:= -\sum_{\rho \in \Delta (1)} h(n(\rho))\,V(\rho)\ ,
\end{equation*}
where $\Delta (1)$ consists of the 1-dimensional cones in $\Delta$.
In particular
\begin{equation*}
D_m={\rm div}(\mathbf e(-m)), \quad m \in M.
\end{equation*}

\begin{lemma} \label{cohomology} Let  $h$ be a $\Delta$-linear support function
with associated convex polytope $\square$ and Cartier divisor $D_h=D_{\square}$.

The vector space
${\rm H}^0(X,\it O_X(D_h))$ of global sections of $O_X(D_{\square})$, i.e., rational functions $f$ on
$X_{\square}$ so that
${\rm div}(f) + D_{\square} \geq 0$ has dimension $\vert (M \cap \square)\vert$, that is the number af lattice points in $\square$, and has
\begin{equation*}
\{\mathbf e(m) | m \in M \cap \square = U\}
\end{equation*}
as a basis.
\end{lemma}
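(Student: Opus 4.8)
The plan is to identify $H^0(X, \mathcal{O}_X(D_h))$ explicitly by decomposing it into weight spaces under the torus action, and then to characterize precisely which characters $\mathbf{e}(m)$ define global sections. First I would recall that the divisor $D_h = -\sum_{\rho \in \Delta(1)} h(n(\rho)) V(\rho)$ is $T_N$-invariant by construction, since each $V(\rho)$ is the closure of a torus orbit. Consequently the finite-dimensional vector space $H^0(X, \mathcal{O}_X(D_h))$ carries a natural $T_N$-action and decomposes as a direct sum of one-dimensional eigenspaces indexed by the characters $\mathbf{e}(m)$ for $m \in M$. The core of the argument is therefore to determine exactly which $m \in M$ contribute a nonzero eigenspace.

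Next I would translate the defining condition $\mathrm{div}(f) + D_\square \geq 0$ into a condition on each coordinate ray $\rho \in \Delta(1)$. For a candidate section $f = \mathbf{e}(m)$, one computes that $\mathrm{div}(\mathbf{e}(m)) = \sum_{\rho} \langle m, n(\rho)\rangle V(\rho)$ along the invariant prime divisors (this follows from $D_m = \mathrm{div}(\mathbf{e}(-m))$ stated in the excerpt, up to the sign convention). The effectivity condition $\mathrm{div}(\mathbf{e}(m)) + D_h \geq 0$ then reads, ray by ray,
\begin{equation*}
\langle m, n(\rho)\rangle - h(n(\rho)) \geq 0 \qquad \text{for all } \rho \in \Delta(1).
\end{equation*}
I would then invoke the reconstruction of the polytope from its support function, namely $\square_h = \{m \in M_{\mathbb{R}} \mid \langle m, n\rangle \geq h(n) \ \forall n \in N\}$, which was recorded in the excerpt. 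Since the inequalities defining $\square$ are determined by the finitely many primitive generators $n(\rho)$ spanning the rays of the fan, the condition above is satisfied precisely when $m \in \square$. Combined with $m \in M$, this shows that $\mathbf{e}(m)$ is a global section of $\mathcal{O}_X(D_h)$ if and only if $m \in M \cap \square = U$.

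To finish, I would assemble these two observations: the $T_N$-eigenspace decomposition exhibits a basis consisting of characters, and the effectivity analysis selects exactly the characters indexed by $M \cap \square$. Hence $\{\mathbf{e}(m) \mid m \in M \cap \square\}$ is a basis, and the dimension equals $\lvert M \cap \square\rvert$, the number of lattice points of $\square$, as claimed. The main obstacle I anticipate is bookkeeping with the sign conventions in the definition of $D_h$ and in $D_m = \mathrm{div}(\mathbf{e}(-m))$: one must verify that the orientations align so that the effectivity inequalities reproduce the \emph{inner} description $\langle m, n(\rho)\rangle \geq h(n(\rho))$ of $\square$ rather than its negative. Once the signs are pinned down, the identification of the weight set with $M \cap \square$ is immediate from the polytope-support-function duality, and no further estimates are needed.
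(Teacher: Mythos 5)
Your argument is correct; note that the paper itself offers no proof of this lemma, treating it as a standard fact from the general theory of toric varieties (the cited references of Oda and Fulton), and the weight-space argument you give is precisely the standard one found there. Two steps deserve to be made explicit. First, the reason each $T_N$-eigenspace of ${\rm H}^0(X,\mathcal O_X(D_h))$ is at most one-dimensional and spanned by a character is that restriction to the dense torus embeds the section space $T_N$-equivariantly into $k[M]=\bigoplus_{m\in M}k\cdot\mathbf e(m)$, whose weight spaces are one-dimensional; this is what lets you reduce to testing monomials. Second, passing from the ray-by-ray condition $\langle m,n(\rho)\rangle\geq h(n(\rho))$ for all $\rho\in\Delta(1)$ to the full condition $\langle m,n\rangle\geq h(n)$ for all $n\in N$ (and hence to $m\in\square_h$) uses that $h$ is linear on each cone $\sigma$ of the normal fan and that $\sigma$ is generated by its rays, so the linear function $\langle m,\cdot\rangle-\langle l_\sigma,\cdot\rangle$ is nonnegative on all of $\sigma$ once it is nonnegative on the generators. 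With those two points filled in, your sign bookkeeping is consistent with the paper's conventions $D_h=-\sum_\rho h(n(\rho))V(\rho)$ and $D_m=\operatorname{div}(\mathbf e(-m))$, and the proof is complete.
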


For a $\Delta$-linear support function $h$ and a 1-dimensional
cone $\rho \in \Delta (1)$ the intersection
number $(D_h;V(\rho))$ between the Cartier divisor $D_h$ of (\ref{Cartier}) and
$V(\rho)) =\mathbb P^1$ is obtained in \cite[Lemma
2.11]{Oda1988}. The 1-dimensional cone $\rho \in \Delta (1)$ is the common face of two 2-dimensional
cones $\sigma', \sigma'' \in \Delta (2)$. Choose primitive
elements $n', n'' \in N$ so that
\begin{align*}
n'+n''& \in \mathbb R \rho\\
\sigma' + \mathbb R \rho &= \mathbb R_{\geq 0} n' + \mathbb R \rho\\
\sigma'' + \mathbb R \rho &= \mathbb R_{\geq 0} n'' + \mathbb R \rho
\end{align*}
\begin{lemma}
\label{inter}For any $l_{\rho} \in M$, such that $h$ coincides
with $l_{\rho}$ on $\rho$, let $\overline{h} = h-l_{\rho}$. Then
\begin{equation*}
(D_h;V(\rho))= -(\overline{h}(n')+\overline{h}(n'')).
\end{equation*}
\end{lemma}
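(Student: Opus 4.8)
The plan is to identify $(D_h;V(\rho))$ with the degree of $O_X(D_h)$ restricted to the complete rational curve $V(\rho)\cong\mathbb{P}^1$, after replacing $D_h$ by a linearly equivalent $T_N$-invariant divisor whose restriction to $V(\rho)$ is transparent.

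First I would use the displayed identity $D_m=\mathrm{div}(\mathbf{e}(-m))$. Since $l_\rho\in M$, the globally linear support function $n\mapsto\langle l_\rho,n\rangle$ yields the principal divisor $D_{l_\rho}=\mathrm{div}(\mathbf{e}(-l_\rho))$. Because $h\mapsto D_h=-\sum_{\rho_i\in\Delta(1)}h(n(\rho_i))V(\rho_i)$ is additive in $h$, we get $D_{\overline h}=D_h-D_{l_\rho}$, so $D_h$ and $D_{\overline h}$ are linearly equivalent and $(D_h;V(\rho))=(D_{\overline h};V(\rho))$. The gain is that $\overline h$ vanishes on $\rho$ by construction, hence the coefficient of $V(\rho)$ in $D_{\overline h}$ is $-\overline h(n(\rho))=0$; this kills the self-intersection term and reduces the computation to the transverse intersections of $V(\rho)$ with the other boundary curves.

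Next I would bring in the combinatorics of the refined normal fan. Write $\rho',\rho''$ for the edges of $\sigma',\sigma''$ other than $\rho$, and $x_{\sigma'},x_{\sigma''}$ for the two $T_N$-fixed points of $V(\rho)\cong\mathbb{P}^1$. Among all $T_N$-invariant curves only $V(\rho')$ and $V(\rho'')$ meet $V(\rho)$, since $\rho_i$ and $\rho$ span a cone of $\Delta$ exactly for $\rho_i\in\{\rho',\rho''\}$. As consecutive ray generators form a $\mathbb Z$-basis in the refined fan, each chart $U_{\sigma'}\cong k^2$ exhibits $V(\rho)$ and $V(\rho')$ as coordinate axes, so they meet transversally in the single reduced point $x_{\sigma'}$; thus $(V(\rho');V(\rho))=(V(\rho'');V(\rho))=1$ and $(V(\rho_i);V(\rho))=0$ otherwise. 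I would then verify that $n(\rho'),n(\rho'')$ satisfy the three defining conditions for $n',n''$: each lies on the correct side of the line $\mathbb R \rho$ (being an edge of $\sigma'$, resp. $\sigma''$), and the $\mathbb Z$-basis property gives the wall relation $n(\rho')+n(\rho'')=c\,n(\rho)$ for some $c\in\mathbb Z$, i.e. $n(\rho')+n(\rho'')\in\mathbb R \rho$; hence one may take $n'=n(\rho')$ and $n''=n(\rho'')$.

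Combining the vanishing coefficient of $V(\rho)$ with the two transverse intersections gives
\[
(D_h;V(\rho))=(D_{\overline h};V(\rho))=-\overline h(n(\rho'))-\overline h(n(\rho''))=-(\overline h(n')+\overline h(n'')),
\]
which is the assertion. I expect the main obstacle to be the bookkeeping of conventions together with the transversality input: one must pin down the sign in $D_{\overline h}=-\sum\overline h(n(\rho_i))V(\rho_i)$ and confirm, via smoothness of the refined fan, that $V(\rho')$ and $V(\rho)$ meet in a single reduced point so the local multiplicity is exactly $1$. Equivalently, following the cited Lemma~2.11 of \cite{Oda1988}, one computes $(D_{\overline h};V(\rho))$ as the degree on $\mathbb{P}^1$ of the restricted bundle by reading off its trivializing characters $\mathbf{e}(-l_{\sigma'}),\mathbf{e}(-l_{\sigma''})$ at $x_{\sigma'},x_{\sigma''}$; since $\overline h|_\rho=0$ forces $l_{\sigma'},l_{\sigma''}\in\rho^{\perp}$, the two orders of vanishing come out as $-\overline h(n')$ and $-\overline h(n'')$, whose sum is the asserted degree.
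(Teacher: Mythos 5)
Your argument is correct, but note that the paper does not actually prove this lemma: it is quoted verbatim from Oda, Lemma~2.11, with no argument supplied. Your second route (trivialize $O_X(D_{\overline h})$ on the two charts $U_{\sigma'},U_{\sigma''}$ by the characters $\mathbf e(-l_{\sigma'}),\mathbf e(-l_{\sigma''})$ and read off the degree on $V(\rho)\cong\mathbb P^1$ as the sum of the orders of vanishing $-\overline h(n')$ and $-\overline h(n'')$) is essentially Oda's own proof, and it has the advantage of working without any smoothness hypothesis, since it never invokes intersection multiplicities of boundary curves. Your first route --- pass to the linearly equivalent divisor $D_{\overline h}$, observe that the coefficient of $V(\rho)$ vanishes, and sum the transverse intersections with $V(\rho')$ and $V(\rho'')$ --- is a clean and more elementary alternative, but it genuinely uses that both $\sigma'$ and $\sigma''$ are nonsingular cones: that is what makes $(V(\rho');V(\rho))=(V(\rho'');V(\rho))=1$ and what forces the wall relation $n(\rho')+n(\rho'')\in\mathbb R\rho$, so that you may take $n'=n(\rho')$, $n''=n(\rho'')$. (For a singular $\sigma'$ the admissible $n'$ is not the ray generator $n(\rho')$ and the intersection number with $V(\rho')$ is only rational, so the first argument would break down; the second would not.) This caveat matters slightly for the paper, which later applies the lemma on a singular toric surface while explicitly declining to desingularize --- there the application is only to $\rho_1$, whose two adjacent cones $\sigma_1,\sigma_4$ happen to be smooth, so your smooth-case argument does cover every use the paper makes of the lemma.
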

In the 2-dimensional non-singular case let $n(\rho)$ be a
primitive generator for the 1-dimensional cone $\rho$. There
exists an integer $a$ such that
\begin{equation*}
n'+n''+a n(\rho)=0,
\end{equation*}
$V(\rho)$ is itself a Cartier divisor and the above determines the
self-intersection number
\begin{equation*}\label{a}
(V(\rho);V(\rho))=a\ .
\end{equation*}

\subsection{Hirzebruch surfaces}
Let $d, e, r$ be  positive integers and let $\square$ be the polytope
in $M_{\mathbb R}$ with vertices $(0,0),(d,0),(d,e+rd),(0,e)$  rendered in 
Figure \ref{hirzpolytop} and with refined normal fan depicted in Figure \ref{fan2}. The related toric surface is called a \emph{Hirzebruch surface}.

We obtain the following result as a consequence of \reftheorem{thresholds} and the bounds obtained in \cite{53ee7c6020b511dcbee902004c4f4f50} on the number of zeros of functions on such surfaces.
\begin{figure}
\caption{Hirzebruch surfaces. The convex polytope $H$ with vertices $(0,0), (q-2,0), (q-2,q-2), (0,q-2)$, the convex polytope $\square$ with vertices $(0,0), (d,0), (d,e+rd), (0,e)$ and their opposite convex polytopes $-H$ and $-\square$. Also the (non-convex) polytope $-H\setminus -\square$ is depicted.}\label{hirzpolytop}
\begin{center}
\begin{tikzpicture}[scale=0.6]
\draw[very thick] (0,7.5)--(15,7.5);
\draw[very thick] (7.5,0)--(7.5,15);
\draw[thick,fill=black!30] (7.5,7.5)--(9.5,7.5)--(9.5,13.5)--(7.5,10.5)--(7.5,7.5);
\draw[thick,fill=black!20] (7.5,7.5)--(5.5,7.5)--(5.5,1.5)--(7.5,4.5)--(7.5,7.5);
\draw[thick,fill=black!10] (5.5,7.5)--(5.5,1.5)--(7.5,4.5)--(7.5,0)--(0,0)--(0,7.5)--(5.5,7.5);
\draw[thick] (7.5,7.5)--(15,7.5)--(15,15)--(7.5,15)--(7.5,7.5);
\draw (9.5,7.5) node[anchor=north]{$d$};
\draw (7.5,10.5) node[anchor=east]{$e$};
\draw (7.5,13.5) node[anchor=east]{$e+rd$};
\draw(8.5,9.5) node[anchor=north]{$\square$};
\draw(13.5,10.5) node[anchor=north]{$H$};
\draw(6.5,6.5) node[anchor=north]{$-\square$};
\draw(2.5,3.5) node[anchor=north]{$-H\setminus-\square$};
\draw[step=.5cm,gray,thin,dashed](0,0) grid (15,15);
\draw (7.5,15) node[anchor=east]{$q-2$};
\draw (7.5,0) node[anchor=west]{$-(q-2)$};%\draw (14,7.5) node[anchor=north]{$q-2$};
\draw (14.5,6.5) node[anchor=north,rotate=90]{$(q-2)$};
\draw (0.5,8.5) node[anchor=south,rotate=90]{$-(q-2)$};
\end{tikzpicture}
\end{center}
\end{figure}
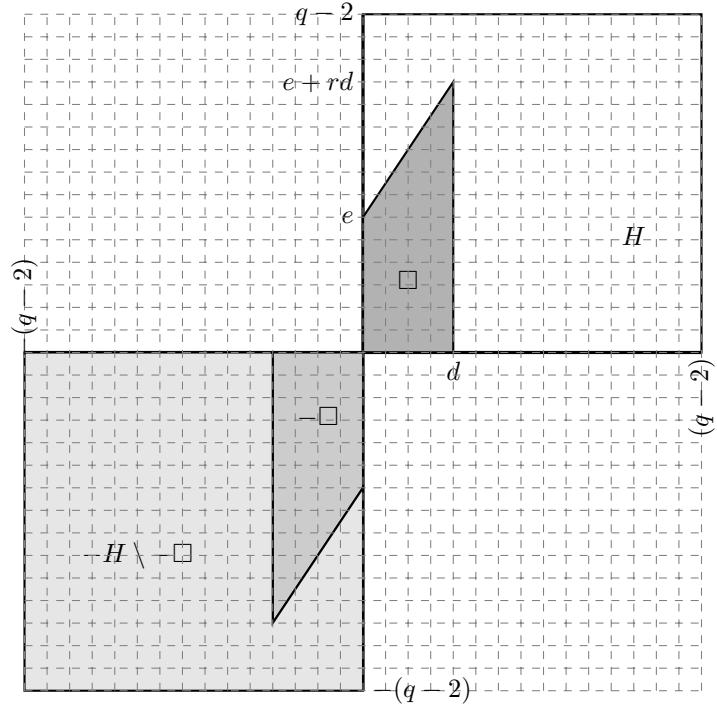

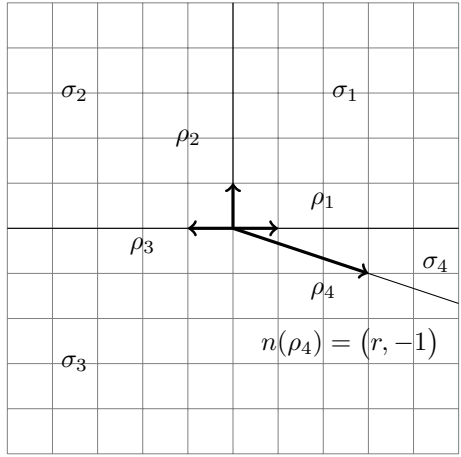
\begin{figure}
\begin{center}
\begin{tikzpicture}[scale=0.6]
\draw[step=1cm,gray,very thin] (-5,-5) grid (5,5);
\draw (-5,0)--(5,0);
\draw (0,0)--(0,5);
\draw (0,0)--(5,-5/3);
\draw[->, very thick] (0,0)--(0,1);
\draw[->, very thick] (0,0)--(-1,0);
\draw[->, very thick] (0,0)--(1,0);
\draw[->, very thick] (0,0)--(3,-1);
\draw (-0.5,2) node[anchor=east] {$\rho_2$};
\draw (-2,0) node[anchor=north] {$\rho_3$};
\draw (2,1) node[anchor=north] {$\rho_1$};
\draw (2,-1) node[anchor=north] {$\rho_4$};
\draw (3,3) node[anchor=east] {$\sigma_1$};
\draw (-3,3) node[anchor=east] {$\sigma_2$};
\draw (-3,-3) node[anchor=east] {$\sigma_3$};
\draw (5,-.8) node[anchor=east] {$\sigma_4$};
\draw (2.6,-2) node[anchor=north] {$n(\rho_4)=\big(r,-1\big)$};

\end{tikzpicture}
\end{center}
\caption{The normal fan and its 1-dimensional cones $\rho_i$, with primitive generators $n(\rho_i)$, and 2-dimensional cones $\sigma_i$ for $i=1,\dots, 4$ of the polytope $\square$ in
Figure \ref{hirzpolytop}.}\label{fan2}
\end{figure}
\begin{theorem} \label{saetning4} 
Let $\square$ be the polytope
in $M_{\mathbb R}$ with vertices $(0,0),(d,0),(d,e+rd),(0,e)$. Assume that  $d\leq q-2$,  $e\leq q-2$ and that $e+rd\leq q-2$. Let $U= M \cap \square$ be the lattice points in $\square$.

Let $\mathcal{M}(U)$ be the linear secret sharing schemes of \refdefn{LSSS} with \emph{support} $T(\mathbb F_q)$ and $(q-1)^2-1$ players.

Then the number of lattice points in $\square$ is
\begin{equation*}
\vert U \vert = \vert (M \cap
\square)\vert =(d+1)(e+1)+r\frac{d(d+1)}{2}\ .
\end{equation*}

The maximal number of zeros of a function $f\in \Fq<U>$ on $T(\Fq)$ is
\begin{equation*}
\max\{d(q-1)+(q-1-d)e,(q-1)(e+dr)\}
\end{equation*}
and the reconstruction threshold as defined in \refdefn{defthresholds} of $\mathcal{M}(U)$  is
\begin{equation*}\label{rU}
r(U) = 1 + \max\{d(q-1)+(q-1-d)e,(q-1)(e+dr)\} \ .
\end{equation*}
\end{theorem}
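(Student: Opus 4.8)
The plan is to split the statement into three independent assertions and dispatch them in turn: the lattice-point count $\vert U\vert$, the maximal number of zeros of a function in $\Fq<U>$, and the resulting reconstruction threshold $r(U)$. The last is immediate once the second is established, since \reftheorem{thresholds} gives $r(U)\geq (\text{max zeros})+2$, and equality with $1+(\text{max zeros})$ will follow from exhibiting a function achieving the bound together with the standard fact that minimum distance is attained; so the real work lies in the first two assertions, and the genuine obstacle is the zero-count.

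First I would compute $\vert U\vert$ by slicing the polytope $\square$ by vertical lines $X_1=j$ for $j=0,\dots,d$. The polytope has vertices $(0,0),(d,0),(d,e+rd),(0,e)$, so its bottom edge is the segment from $(0,0)$ to $(d,0)$ and its top edge runs from $(0,e)$ to $(d,e+rd)$; the top edge therefore has height $e+rj$ above the column $X_1=j$. Each vertical slice at integer $j$ contains the lattice points with second coordinate $0\leq u_2\leq e+rj$, giving $e+rj+1$ points. Summing over $j=0,\dots,d$ yields $\sum_{j=0}^d(e+1)+r\sum_{j=0}^d j=(d+1)(e+1)+r\,\frac{d(d+1)}{2}$, which is exactly the claimed count. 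The hypotheses $d\leq q-2$, $e\leq q-2$, $e+rd\leq q-2$ ensure $U\subseteq H$, so that \reftheorem{thresholds} and \refprop{orto} apply and the dimension equals $\vert U\vert$ by \reflemma{cohomology}.

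The main obstacle is the zero-count, and here I would invoke the intersection-theoretic bound on zeros of toric functions from \cite{53ee7c6020b511dcbee902004c4f4f50}, which is precisely what the theorem cites. The strategy is to bound the number of $T(\Fq)$-zeros of $f\in\Fq<U>$ by the degree of the divisor of zeros of the corresponding global section of $O_X(D_\square)$ on the Hirzebruch surface, decomposing $\operatorname{div}(f)+D_\square$ into its horizontal and fiber components. Using \reflemma{inter} and the self-intersection formula on the fan of Figure~\ref{fan2}, the two competing configurations of the zero divisor contribute the two quantities inside the maximum: the term $d(q-1)+(q-1-d)e$ arises when the zeros concentrate along a configuration dominated by the fiber direction, while $(q-1)(e+dr)$ arises from the horizontal direction, reflecting the two edges of $\square$ whose lengths are governed by $d$ and by $e+rd$. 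One then checks that these bounds are sharp by producing explicit monomials or products of binomials $X_i-\alpha$ whose evaluations vanish on the maximal number of torus points, so the maximum is actually attained.

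Finally, combining the sharp zero-count with \reftheorem{thresholds} gives $r(U)\geq(\text{max zeros})+2$; but since the code $C=\pi_{T(\Fq)}(\Fq<U>)$ has length $(q-1)^2$ and minimum distance $d_C=(q-1)^2-(\text{max zeros})$, the Massey reconstruction threshold is $r(U)=\vert T(\Fq)\vert-d_C+2=(\text{max zeros})+2$ counting the point $P_0$, which equals $1+\max\{d(q-1)+(q-1-d)e,(q-1)(e+dr)\}$ once the extra player/secret coordinate is accounted for. I expect the arithmetic of the two maxima and their geometric identification via the refined normal fan to be the delicate bookkeeping, but the substantive content is entirely carried by the cited zero-count on Hirzebruch surfaces.
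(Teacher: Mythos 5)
Your proposal follows essentially the same route as the paper, which offers no written proof of \reftheorem{saetning4} beyond deriving it from \reftheorem{thresholds} together with the zero-count for Hirzebruch surfaces established in the cited reference \cite{53ee7c6020b511dcbee902004c4f4f50} --- exactly the two ingredients you invoke, supplemented by the routine column-by-column count giving $(d+1)(e+1)+r\frac{d(d+1)}{2}$. Your reconciliation of the ``$+2$'' appearing in \reftheorem{thresholds} with the ``$+1$'' in the statement (the code has length $(q-1)^2$ while there are only $(q-1)^2-1$ players, the remaining coordinate carrying the secret) is also the correct reading of the Massey construction.
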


\begin{remark}
The polytope $-H \setminus -U$ is not convex, so our method using intersection theory does not determine the privacy threshold $t(U)$. It would be interesting to examine the methods and results of \cite{MR2272243}, \cite{MR2476837}, \cite{MR2322944}, \cite{MR2360532}, \cite{Beelen},\cite{MR3093852} \cite{MR3345095},  and \cite{DBLP:journals/corr/Little15} for toric codes in this context.
\end{remark}

\subsection{Toric surfaces with associated linear secret sharing schemes with strong multiplication}
Let $a, b$ be  positive integers $0 \leq b \leq a \leq q-2$, and let $\square$ be the polytope
in $M_{\mathbb R}$ with vertices $(0,0),(a,0),(b,q-2),(0,q-2)$  rendered in 
Figure \ref{polytope} and with normal fan depicted in Figure \ref{cones}. 

Under these assumptions the polytopes $\square$, $-H\setminus-\square$ and $\square+\square$ are convex and we can use intersection theory on the associated toric surface to bound the number of zeros of functions and thresholds.

The primitive generators of the 1-dimensional cones are
\begin{equation*}
n(\rho_1)=\begin{pmatrix}1\\0\end{pmatrix},\ 
n(\rho_2)=\begin{pmatrix}0\\1\end{pmatrix},\ 
n(\rho_3)=\begin{pmatrix}\frac{-(q-2)}{\gcd(a-b,q-2)}\\\frac{-(a-b)}{\gcd(a-b,q-2)}\end{pmatrix}\ ,
n(\rho_4)=\begin{pmatrix}0\\-1\end{pmatrix}\ .
\end{equation*} 

For $i=1,\dots, 4$, the 2-dimensional cones $\sigma_i$  are shown in Figure \ref{cones}. The faces of $\sigma_1$ are $\{\rho_1, \rho_2\}$, the faces of $\sigma_2$ are $\{\rho_2, \rho_3\}$, the faces of $\sigma_3$ are $\{\rho_3, \rho_4\}$ and the faces of $\sigma_4$ are $\{\rho_4, \rho_1\}$.

The support function of $\square$ is:
\begin{equation}\label{supportfct}
h_{\square}\begin{pmatrix}n_1\\n_2\end{pmatrix}=
\begin{cases}
\begin{pmatrix}0\\0\end{pmatrix} . \begin{pmatrix}n_1\\n_2 \end{pmatrix} & \text{if $\begin{pmatrix}n_1\\n_2\end{pmatrix} \in \sigma_1$},\\
\begin{pmatrix}a\\0\end{pmatrix} . \begin{pmatrix}n_1\\n_2 \end{pmatrix} & \text{if $\begin{pmatrix}n_1\\n_2\end{pmatrix} \in \sigma_2$} ,\\
\begin{pmatrix}b\\q-2\end{pmatrix} . \begin{pmatrix}n_1\\n_2 \end{pmatrix} & \text{if $\begin{pmatrix}n_1\\n_2\end{pmatrix} \in \sigma_3$} ,\\
\begin{pmatrix}0\\q-2\end{pmatrix} . \begin{pmatrix}n_1\\n_2 \end{pmatrix} & \text{if $\begin{pmatrix}n_1\\n_2\end{pmatrix} \in \sigma_4$}.
\end{cases}
\end{equation}

 The related toric surface is in general singular as $\{n(\rho_2), n(\rho_3)\}$ and $\{n(\rho_3), n(\rho_4)\}$ are not bases for the lattice $M$. We can desingularize by subdividing the cones $\sigma_2$ and $\sigma_3$, however, our calculations will only involve the cones $\sigma_1$ and $\sigma_2$, so we refrain from that.

For all pairs of  1-dimensional cones $\rho_i, \rho_j \in \Delta (1), i= 1, \dots , 4$, the intersection numbers $(V(\rho_i);V(\rho_j))$  are determined by the methods above, however, we only need the self-intersection number $(V(\rho_1);V(\rho_1))$, and as
\begin{equation*}
n(\rho_2)+n(\rho_4) + 0 \cdot n(\rho_1)=0\ ,
\end{equation*}
we have that
\begin{equation}\label{self1}
(V(\rho_1);V(\rho_1)) = 0
\end{equation}
by the remark following \reflemma{inter}.

\begin{theorem} \label{saetning5} 

Assume $a, b$ are integers with $0 \leq b \leq a \leq q-2$.

Let $\square$ be the polytope
in $M_{\mathbb R}$ with vertices $(0,0),(a,0),(b,q-2),(0,q-2)$  rendered in 
Figure \ref{polytope}, and let $U= M  \cap \square$ be the lattice points in $\square$.

Let $\mathcal{M}(U)$ be the linear secret sharing schemes  \refdefn{LSSS} with \emph{support} $T(\mathbb F_q)$ and $n=(q-1)^2-1$ players.

\begin{itemize}

\item[\it{i)}]The maximal number of zeros of $\pi_{T(\Fq)}(f)$ for $f\in \Fq<U>$  is less than or equal to 
\begin{equation*}
(q-1)^2-(q-1-a) \ .
\end{equation*}

\item[\it{ii)}] The reconstruction threshold as defined in \refdefn{defthresholds} satisfies
\begin{equation*}\label{rU2}
r(U) \leq 1+ (q-1)^2-(q-1-a) \ .
\end{equation*}

\item[\it{iii)}] The privacy threshold as defined in \refdefn{defthresholds} satisfies
\begin{equation*}\label{tU2}
 t(U) \geq b-1\ .
\end{equation*}

\item[\it{iv)}] Assume $2a\leq q-2$. The secret sharing scheme has $t$-strong multiplication for 
\begin{equation*}
t\leq \min\{b-1,(q-2-2a)-1\}\ .
\end{equation*}

\end{itemize}
\end{theorem}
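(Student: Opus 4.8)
The plan is to prove the four items in order, with (i) carrying essentially all of the geometric content and (ii)--(iv) following formally from it together with \reftheorem{thresholds}, \refprop{orto} and \reftheorem{strong}. Throughout I would exploit the ruling of $X_\square$ furnished by the identity (\ref{self1}): since $(V(\rho_1);V(\rho_1))=0$, the curve $V(\rho_1)\cong\mathbb P^1$ is a fibre of a morphism $\pi\colon X_\square\to\mathbb P^1$ whose restriction to the torus is the coordinate projection $T(\Fq)=(\F)^2\to\F$, $(x,y)\mapsto x$, and the fibre over a point of $\F$ carries exactly $q-1$ torus points.

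For (i), I would first record that the degree of $D_\square$ on a fibre equals the lattice length of the edge of $\square$ associated to $\rho_1$, namely the left edge from $(0,0)$ to $(0,q-2)$; by \reflemma{inter} this gives $(D_\square;V(\rho_1))=q-2$. A nonzero $f\in\Fq<U>$ cuts out a curve $C$ linearly equivalent to $D_\square$, so $C$ meets each fibre in at most $q-2$ points unless it contains that fibre entirely. Writing $k$ for the number of fibres over torus points that are contained in $C$, the number of torus zeros of $f$ is therefore at most $k(q-1)+(q-1-k)(q-2)=(q-1)(q-2)+k$. The remaining step is to bound $k$: a fibre $\{x=c\}$ lies in $C$ exactly when $f(c,Y)$ vanishes identically, i.e. when $c$ is a common root of the column polynomials $p_j(X)=\sum_i c_{ij}X^i$ of $f$; since the $X_1$-extent of $\square$ is $a$, every $p_j$ has degree at most $a$ and some $p_j$ is nonzero, forcing $k\le a$. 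This yields the bound $(q-1)(q-2)+a=(q-1)^2-(q-1-a)$ of (i).

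Item (ii) is then immediate from the Massey correspondence of \reftheorem{thresholds}, which bounds $r(U)$ by $1$ plus the maximal number of torus zeros of functions in $\Fq<U>$. For (iii) I would pass to the dual code, which by \refprop{orto} is $\pi_{T(\Fq)}(\Fq<-H\setminus -U>)$, with $t(U)=d'-2$ for its minimum distance $d'$. The crucial point, and the feature distinguishing this trapezoid from the Hirzebruch polytope of \reftheorem{saetning4}, is that $-H\setminus-\square$ is itself convex; translating it by $(q-2,q-2)$ produces a polytope of the same shape as $\square$ but with width parameters $(q-2-b,q-2-a)$, and translation preserves the number of torus zeros by \reflemma{trans}. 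Applying (i) to this translate bounds the zeros of the dual functions by $(q-1)^2-(b+1)$, whence $d'\ge b+1$ and $t(U)\ge b-1$.

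For (iv) I would invoke \reftheorem{strong}: the scheme has $t$-strong multiplication for $t\le t(U)$ provided $t\le n-1-(\text{maximal number of zeros of }h)$ for every $h\in\Fq<U+U>$. The first condition is secured by (iii) as soon as $t\le b-1$. For the second I would repeat the fibration count of (i) on functions supported on $U+U\subseteq 2\square$; here the hypothesis $2a\le q-2$ is exactly what prevents the $X_1$-exponents from wrapping around modulo $q-1$ (cf. \refcor{reduction}), so the fibrewise degree estimate and the bound $k\le 2a$ on absorbed fibres survive verbatim, while the $X_2$-direction is handled by reducing modulo $Y^{q-1}-1$. This gives the zero bound $(q-1)^2-(q-1-2a)$ for $\Fq<U+U>$, and substituting into \reftheorem{strong} turns the second condition into $t\le(q-2-2a)-1$; combining the two constraints gives $t\le\min\{b-1,(q-2-2a)-1\}$. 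The main obstacle is the zero-count in (i), and specifically the control of the number $k$ of entire fibres absorbed into the zero divisor: this is the term producing the additive corrections $+a$ and $+2a$, and it is where the self-intersection (\ref{self1}) and the fibre degree $q-2$ do the real work. Everything after (i) is bookkeeping with Massey duality and translation invariance.
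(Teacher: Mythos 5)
Your proposal is correct and follows the paper's own argument in all essentials: the decomposition of $T(\Fq)$ into the $q-1$ lines $\mathbf e(m_1)=\eta$, the bound $q-2$ on the transverse intersections obtained from \reflemma{inter} together with $(V(\rho_1);V(\rho_1))=0$, the translation of the convex complement $(q-2,q-2)+(-H\setminus-\square)$ for part {\it iii)}, and the reduced Minkowski sum $\overline{U+U}$ fed into \reftheorem{strong} for part {\it iv)}. The only local difference is that you bound the number of lines absorbed into the zero divisor by an elementary argument on the column polynomials $p_j(X)$ of degree at most $a$, whereas the paper deduces $f\in {\rm H}^0(X,O_X(D_h-c\,(\mathbf e(m_1))_0))$ and invokes \reflemma{cohomology} to force $c\leq a$; the two are equivalent, and your version is, if anything, more self-contained.
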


\begin{proof}
Let $m_{1}=(1,0)$. 
The $\Fq$-rational points of $T \simeq \Fbarq^{*} \times \Fbarq^{*}$
belong to the $q-1$ lines on $X_{\square}$ given by
\begin{equation*}
\prod_{\eta \in \F}(\mathbf e(m_{1})-\eta) =0\ .
\end{equation*}
Let $0 \neq f \in {\rm H}^0(X,\it O_X(D_h))$. Assume that $f$ is zero along precisely $c$ of these lines. 

As $\mathbf e(m_{1})-\eta$ and $\mathbf e(m_{1})$ have the same divisors of poles, they have equivalent
divisors of zeroes, so
\begin{equation*}
(\mathbf
e(m_{1})-\eta)_{0} \sim (\mathbf
e(m_{1}))_{0}\ .
\end{equation*}
Therefore
\begin{equation*}
{\rm div}(f) + D_h -c (\mathbf
e(m_{1}))_{0}\geq 0
\end{equation*}
or equivalently
\begin{equation*}
f \in {\rm H}^0(X,\it
O_X(D_h-c (\mathbf
e(m_{1}))_{0})\ .
\end{equation*}
This implies that $c \leq a$ according to \reflemma{cohomology}. 

On any of the other $q-1-c$ lines the number of zeroes of $f$ is at most the intersection number
\begin{equation*}
(D_{h}-c (\mathbf e(m_{1}))_{0};(\mathbf
e(m_{1}))_{0})\ .
\end{equation*}
 
This number can be calculated using \reflemma{inter} using the observation that $(\mathbf e(m_{1}))_{0}=V(\rho_1)$.

We get from (\ref{supportfct}) and (\ref{self1}) that
\begin{eqnarray*}
&(D_{h}-c (\mathbf e(m_{1}))_{0};(\mathbf
e(m_{1}))_{0})= \\
&(D_{h};(\mathbf
e(m_{1}))_{0}) - c (\mathbf e(m_{1}))_{0};(\mathbf
e(m_{1}))_{0})= \\
&- h_{\square}\begin{pmatrix}0\\1\end{pmatrix}-h_{\square}\begin{pmatrix}0\\-1\end{pmatrix} = q-2\ ,
\end{eqnarray*}
as $l_{{\rho}_1} = \begin{pmatrix}0\\0\end{pmatrix} \in M$. 

As $0\leq c \leq a$, we conclude the
total number of zeroes for $f$ is at most
\begin{equation*}
c(q-1)+(q-1-c)(q-2) \leq a(q-1)+(q-1-a)(q-2)= (q-1)^2-(q-1-a)
\end{equation*}
proving {\it i)}.

According to \reftheorem{thresholds}, we have the inequality of {\it ii)}
\begin{equation*}
r(U) \leq 1+ (q-1)^2-(q-1-a) \ .
\end{equation*}

We obtain {\it iii)} by using the result in {\it i)} on the polytope $(q-2,q-2)+\big(-H \setminus -\square \big)$ with vertices $(0,0), (q-2-b,0),(q-2-a,q-2)$ and $(q-2,q-2)$. The maximum number of zeros of $\pi_{T(\Fq)}(g)$ for $g\in \Fq<-H \setminus - U>$ is by \reflemma{trans} and the result in {\it i)} less than or equal to $(q-1)^2-(q-1-(q-2-b))=(q-1)^2-1-b$ and {\it iii)} follows from \reftheorem{thresholds}.

To prove {\it iv)} assume $t \leq (q-2-2a)-1$ and $t \leq b-2$. We will use \reftheorem{strong}.

Consider the Minkowski sum $U+U$ and let $V=\overline{U + U}$ be  its reduction modulo $q-1$ as in \refcor{reduction}. Under the assumption $2a \leq q-2$, we have that $V=\overline{U + U}$ is the lattice points of the integral convex polytope  with vertices $(0,0),(2a,0), (2b, q-2)$ and $(0,q-2)$. 

By the result in {\it i)} the maksimum number of zeros of $\pi_{T(\Fq)}(h)$ for $h\in \Fq<V>$
is less than or equal to $(q-1)^2-(q-1-2a)$. As the number of players is $n=(q-1)^2-1$, the right hand side of the condition (\ref{strongbet}) of \reftheorem{strong} is at least $(q-2-2a)-1$, which by assumption is at least $t$.

By assumption $t \leq b-1$ and from {\it iii)} we have that $b-1 \leq t(U)$. We conclude that $t \leq t(U)$.

\end{proof}

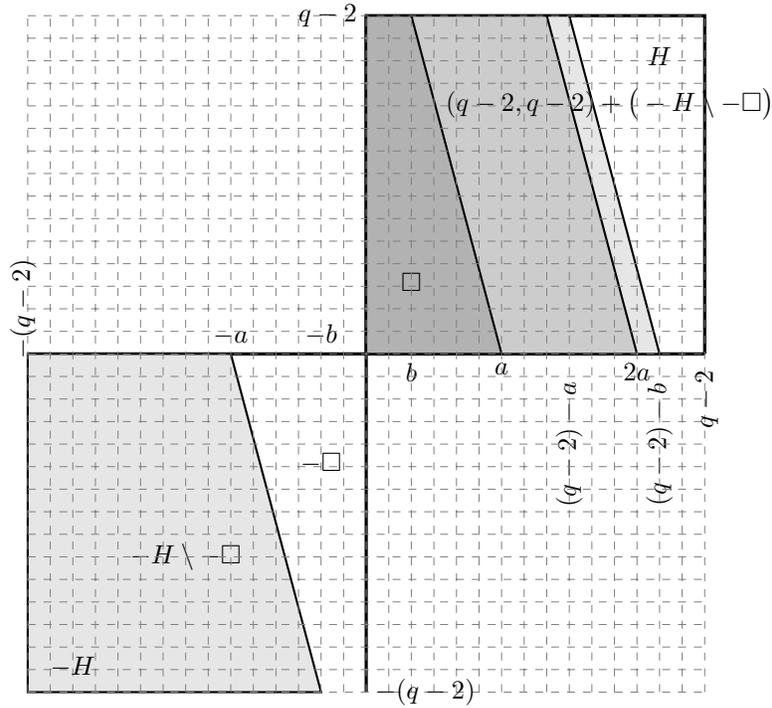
\begin{figure}[H]
\caption{The convex polytope $H$ with vertices $(0,0), (q-2,0), (q-2,q-2), (0,q-2)$ and the convex polytope $\square$ with vertices $(0,0), (a,0), (b,q-2), (0,q-2))$ are shown. Also their opposite convex polytopes $-H$ and $-\square$, the complement
$ -H \setminus -\square$ and its translate $(q-2,q-2)+\big(-H \setminus -\square \big)$ are depicted. 
Finally the convex hull of the reduction modulo $q-1$ of the Minkowski sum $U+U$ of the lattice points $U= \square \cap M$ in $\square$, is rendered. It has vertices $(0,0),(2a,0), (2b,q-2)$ and $(0,q-2)$.}\label{polytope}
\begin{center}
\begin{tikzpicture}[scale=0.6]
\draw[very thick] (0,7.5)--(15,7.5);
\draw[very thick] (15,7.5)--(15,15)--(7.5,15)--(7.5,7.5);
\draw[very thick] (7.5,0)--(7.5,15);
\draw (8.5,7.5) node[anchor=north]{$b$};
\draw (10.5,7.5) node[anchor=north]{$a$};
\draw(8.5,9.5) node[anchor=north]{$\square$};
\draw(14,14.5) node[anchor=north]{$H$};
\draw (4.5,7.5) node[anchor=south]{$-a$};
\draw (6.5,7.5) node[anchor=south]{$-b$};
\draw(6.5,5.5) node[anchor=north]{$-\square$};
\draw[thick,fill=black!10] (4.5,7.5)--(0,7.5)--(0,0)--(6.5,0)--(4.5,7.5);
\draw(3.5,3.5) node[anchor=north]{$-H \setminus -\square$};
\draw[thick,fill=black!10] (12,15)--(7.5,15)--(7.5,7.5)--(14,7.5)--(12,15);
\draw[thick,fill=black!20] (7.5,7.5)--(13.5,7.5)--(11.5,15)--(7.5,15)--(7.5,7.5);
\draw[thick,fill=black!30] (7.5,7.5)--(10.5,7.5)--(8.5,15)--(7.5,15)--(7.5,7.5);
\draw(8.5,9.5) node[anchor=north]{$\square$};
\draw(12.9,13.6) node[anchor=north]{$(q-2,q-2)+\big(-H \setminus -\square \big)$};
\draw (13.5,5.5) node[anchor=north,rotate=90]{$(q-2)-b$};
\draw (11.5,5.5) node[anchor=north,rotate=90]{$(q-2)-a$};
\draw (13.5,7.5) node[anchor=north]{$2a$};
\draw[step=.5cm,gray,very thin,dashed]
(0,0) grid (15,15);
\draw(1,1) node[anchor=north]{$-H$};
\draw (7.5,15) node[anchor=east]{$q-2$};
\draw (7.5,0) node[anchor=west]{$-(q-2)$};
\draw (14.6,6.5) node[anchor=north,rotate=90]{$q-2$};
\draw (0.4,8.5) node[anchor=south,rotate=90]{$-(q-2)$};
\end{tikzpicture}
\end{center}
\end{figure}

\begin{figure}[H]
\caption{The normal fan and its 1-dimensional cones $\rho_i$, with primitive generators $n(\rho_i)$, and 2-dimensional cones $\sigma_i$ for $i=1,\dots, 4$ of the polytope $\square$ in Figure \ref{polytope}.}\label{cones}
\begin{center}
\begin{tikzpicture}[scale=0.6]
\draw[thick] (0,7.5)--(15,7.5);
\draw[thick] (7.5,0)--(7.5,15);
\draw[very thick] (7.5,7.5)--(7.5,15);
\draw[very thick] (7.5,7.5)--(7.5,0);
\draw[very thick] (7.5,7.5)--(15,7.5);
\draw[very thick] (7.5,7.5)--(0,5.5);
\draw[->, very thick] (7.5,7.5)--(7.5,8);
\draw[->, very thick] (7.5,7.5)--(7.5,7);
\draw[->, very thick] (7.5,7.5)--(8,7.5);
\draw[->, very thick] (7.5,7.5)--(0,5.5);
\draw (7.3,9.5) node[anchor=east] {$\rho_2$};
\draw (2,7) node[anchor=north] {$\rho_3$};
\draw (9.5,8.3) node[anchor=north] {$\rho_1$};
\draw (8,5.8) node[anchor=north] {$\rho_4$};

\draw (11,10.5) node[anchor=east] {$\sigma_1$};
\draw (5,10.5) node[anchor=east] {$\sigma_2$};
\draw (11,4.5) node[anchor=east] {$\sigma_4$};
\draw (5,4.5) node[anchor=east] {$\sigma_3$};
\draw (3.6,3) node[anchor=north] {$n(\rho_3)=\big(\frac{-(q-2)}{\gcd(a-b,q-2)},\frac{-(a-b)}{\gcd(a-b,q-2)}\big)$};
\draw[step=.5cm,gray,very thin,dashed](0,0) grid (15,15);
\end{tikzpicture}
\end{center}
\end{figure}

\bibliographystyle{amsplain}
\bibliography{SSS}

\end{document}